\newtheorem{thm}{Theorem}  
\newtheorem{lemma}[thm]{Lemma}
\newtheorem{prop}[thm]{Proposition}
\newtheorem*{claim A}{Claim A}
\newtheorem*{claim B}{Claim B}
\newtheorem*{claim C}{Claim C}
\newtheorem{corollary}[thm]{Corollary}
\theoremstyle{definition}
\def\min{\operatorname{min}}
\def\max{\operatorname{max}}
\def\c1{\operatorname{c_1}}
\def\c2{\operatorname{c_2}}
\def\gon{\operatorname{gon}}
\def\CC{{\mathbb C}}
\def\PP{{\mathbb P}}
\def\G{{\mathcal G}}
\def\M{{\mathcal M}}
\def\N{{\mathcal N}}
\def\O{{\mathcal O}}
\def\I{{\mathcal J}}
\def\T{{\mathcal T}}
\def\H{{\mathcal H}}
\def\e{\mathfrak{e}}
\def\c{\mathfrak{c}}
\def\x{\times}                   
\def\cong{\simeq}
\def\+{\oplus}               
\def\*{\otimes}     
\def\geq{\geqslant}
\def\leq{\leqslant}              
\def\Aut{\operatorname{Aut}}
\title{Pencils on general covers of an elliptic curve}
\begin{document}

\author[A.~L.~Knutsen]{Andreas Leopold Knutsen}
\address{A.~L.~Knutsen, Department of Mathematics, University of Bergen,
Postboks 7800,
5020 Bergen, Norway}
\email{andreas.knutsen@math.uib.no}

\author[M.~Lelli-Chesa]{Margherita Lelli-Chiesa}
\address{M.~Lelli-Chesa, Dipartimento di Matematica e Fisica, Universit{\`a} degli Studi Roma Tre, Largo San Leonardo Murialdo 1, 00146 Roma, Italy} 
\email{margherita.lellichiesa@uniroma3.it}

\maketitle

\begin{abstract}
  We completely describe the Brill-Noether theory of pencils on general primitive covers of elliptic curves of any degree.
  \end{abstract}

\section{Introduction}In recent years much focus has been directed toward the Brill-Noether theory of covers of $\PP^1$, see, e.g., \cite{Ap,Pf,JP,La,CJ}. Little is however known for covers of higher genus curves, even elliptic curves.
In this paper we settle the Brill-Noether theory of {\it pencils} of a general primitive cover of any fixed elliptic curve.

Fix any smooth irreducible elliptic curve $E$ over $\mathbb C$. A finite cover $\varphi:C \to E$ is called {\it primitive} if it does not factor through another finite cover $\varphi':C \to E'$ of smaller degree to an elliptic curve $E'$. The {\it Hurwitz scheme} $\H^{\tiny \mbox{prim}}_{g,k}(E)$ parametrizes primitive simply branched covers $\varphi:C \to E$ of degree $k$ from some smooth irreducible curve $C$ of genus $g\geq 2$. We recall that $\H^{\tiny \mbox{prim}}_{g,k}(E)$  is smooth and irreducible of dimension $2g-2$ \cite{GK,Buj}. We denote by $\M_{g,k}(E)$ the image  under the natural forgetful map of $\H^{\tiny \mbox{prim}}_{g,k}(E)$ in the moduli space $\M_g$ of genus $g$ curves. Since any cover can be composed with an automorphism of $E$, the locus $\M_{g,k}(E)$ is irreducible of dimension $2g-3$.It turns out (cf. Proposition \ref{intro})  that a general member of $\M_{g,k}(E)$ admits only one $k:1$ cover to $E$ up to the automorphisms of $E$ and, if $g>2$, no finite primitive maps to other elliptic curves. 

As the pullback of any $g^1_2$ on $E$ under a degree $k$ cover of it defines a $g^1_{2k}$ on the covering curve, any curve in $\M_{g,k}(E)$ has gonality at most $\min \left\{ 2k, \Big\lfloor \frac{g+3}{2} \Big\rfloor \right\}$ by standard Brill-Noether theory. We will prove the following fundamental result, which answers a natural question, yet hitherto unanswered:

\begin{thm} \label{thm:main}
  Let $\varphi:C\to E$ define a general point in the Hurwitz scheme $\H^{\tiny \mbox{prim}}_{g,k}(E)$ for $g\geq 2$ and $k\geq 2$. Let $G^1_d(C)^{nc}$ denote the closure of the locus of linear series in $G^1_d(C)$ that are base point free and not composed with $\varphi$. Then the following hold:
  \begin{itemize}
  \item[(i)] $G^1_d(C)^{nc}$ is nonempty if and only if either $k\geq 3$ and $\rho(g,1,d)\geq 0$, or $k=2$ and $d\geq g-1$;
  \item[(ii)] if nonempty, $G^1_d(C)^{nc}$ has pure dimension $\rho(g,1,d):=-g+2d-2$.
  \end{itemize}
    \end{thm}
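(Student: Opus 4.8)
The plan is to split the statement into two opposite inequalities. The locus of base point free pencils that are not composed with $\varphi$ is open in $G^1_d(C)$, since being base point free is an open condition and being composed with $\varphi$ is a closed one; consequently each of its components, and hence each component of its closure $G^1_d(C)^{nc}$, sits inside a component of $G^1_d(C)$ and therefore has dimension at least $\rho(g,1,d)$ by the determinantal lower bound valid on any curve. Thus everything reduces to an \emph{upper bound} — that for a general cover every component of $G^1_d(C)^{nc}$ has dimension at most $\rho(g,1,d)$ — together with a \emph{nonemptiness} statement realizing the stated numerical thresholds. Both will follow by degenerating $\varphi$ inside a suitable partial compactification of $\H^{\mathrm{prim}}_{g,k}(E)$ and invoking upper semicontinuity of fiber dimension for the relative variety of pencils over the Hurwitz scheme.

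For the degeneration I would specialize $\varphi:C\to E$ to a cover whose Brill--Noether theory of pencils is directly computable. The convenient choice lets $E$ acquire a node, so that its normalization is $\PP^1$ and $\varphi$ specializes to an admissible cover $C_0\to E_0$; the curve $C_0$ acquires nodes over the node of $E_0$, and its partial normalization maps to $\PP^1$. Via the theory of limit linear series this converts the question into one about pencils on a degree-$k$ cover of $\PP^1$, where the full Hurwitz--Brill--Noether dictionary — splitting (Maroni) invariants and the dimension formulas developed in \cite{Pf,La,CJ} — is available. A base point free pencil on $C$ not composed with $\varphi$ limits to a series whose aspects are not pulled back from the base, and the splitting-type stratification of the Hurwitz space bounds the dimension of the family of such limits above by $\rho(g,1,d)$. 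Keeping the special cover primitive in the limit is essential: it prevents spurious pencils composed with an intermediate cover from appearing, and here Proposition~\ref{intro}, which excludes factorizations for $g>2$, is exactly what is needed.

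For nonemptiness I would run the degeneration in reverse. Over the degenerate cover one exhibits explicit pencils of the prescribed degree by gluing compatible aspects on the components, and then smooths them by the standard regeneration (smoothing) results for limit linear series, combined with the lower bound on dimension already in hand. When $k\geq 3$ the obstruction is precisely the expected count, so the construction succeeds exactly when $\rho(g,1,d)\geq 0$. The case $k=2$ is genuinely different: a bielliptic curve is Brill--Noether special, and the pulled-back low-degree pencil $\varphi^*$ of a $g^1_2$ forces every sufficiently small pencil to be composed with $\varphi$; tracking the Castelnuovo-type bound for double covers shows that a non-composed base point free pencil first appears at $d\geq g-1$, which is the stated threshold and is strictly stronger than $\rho\geq 0$ for $g\geq 4$.

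I expect the principal obstacle to be the upper bound, and within it the bookkeeping of the limit-series aspects at the nodes lying over the node of $E_0$: one must show that the condition of \emph{not} being composed with $\varphi$ genuinely persists in the limit and cuts the dimension down to $\rho(g,1,d)$, rather than permitting a higher-dimensional boundary stratum of series that become composed only in the limit to dominate the count. The separate treatment of $k=2$, where the naive Brill--Noether expectation overestimates the true answer, is the other delicate point and is what produces the dichotomy recorded in part~(i).
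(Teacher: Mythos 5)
Your reduction of the theorem to an upper bound plus nonemptiness, and your observation that every component of $G^1_d(C)^{nc}$ is a full component of $G^1_d(C)$ (hence has dimension at least $\rho(g,1,d)$), are correct and agree with the paper. The proof itself, however, has a genuine gap at its core. Degenerating $E$ to a nodal cubic $E_0$ produces a cover $C_0\to E_0$ whose nodes lie over the node of $E_0$ and are therefore \emph{non-separating}: $C_0$ is not of compact type. The Eisenbud--Harris theory of limit linear series, its dimension bounds, and the ``standard regeneration (smoothing) results'' you invoke are all restricted to compact type; for non-separating nodes one needs substantially heavier machinery (Osserman-style or tropical), and with it an analysis of the gluing conditions identifying the two fibers of the degree-$k$ cover $\widetilde C_0\to\PP^1$ of genus $g-k$ over the two branches of the node. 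The results of \cite{Pf,La,CJ} concern smooth covers of $\PP^1$ that are general in their Hurwitz space; they neither bound the dimension of families of glued series on $C_0$, nor guarantee that $\widetilde C_0$ can be taken general in its Hurwitz space. So the sentence ``the splitting-type stratification \dots bounds the dimension of the family of such limits above by $\rho(g,1,d)$'' is not a consequence of the cited literature: it \emph{is} the theorem, restated over the boundary, and you yourself flag (without resolving) the attendant problem of series that become composed with $\varphi$ only in the limit. The same objection applies to your existence step. A secondary but real gap: the theorem fixes an \emph{arbitrary} smooth $E$, while degenerating $E$ itself can, by semicontinuity, only yield the statement for $E$ near the boundary of moduli (i.e.\ for general $E$); on special fibers the relevant dimensions may jump, and nothing in your argument rules this out.

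For contrast, the paper never moves $E$ and uses no limit linear series. A base-point-free pencil not composed with $\varphi$, together with $\varphi$, gives a map $C\to T=E\times\PP^1$ birational onto its image of class $kE+dR$ (birationality uses primitivity and a Hurwitz-scheme dimension count excluding factorizations); the upper bound then follows by comparing $\dim V^g(k,d)=2d+g-1$ (Proposition \ref{facile}) minus $\dim\Aut\PP^1$ with $\dim \H^{\mathrm{prim}}_{g,k}(E)=2g-2$. Nonemptiness for $d\geq 2k$ is Proposition \ref{prop:severi}: the stable-map space $\M_g(k,d)^{sm}$ dominates $\M_{g,k}(E)$, which is proved by computing the kernel $H^0(f^*\T_T)$ of the differential of the forgetful map and establishing the vanishing $h^0(\omega_C-2A)=0$ by specializing the nodes to those of reducible curves $D+E_1+E_2$ (Lemmas \ref{claimA} and \ref{claimB}). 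Incidentally, your Castelnuovo--Severi remark for $k=2$ is the one delicate point of your plan that is correct as stated: it is equivalent to the paper's arithmetic-genus bound $g\leq p_a(2E+dR)=d+1$, which yields the ``only if'' direction $d\geq g-1$.
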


     Part (ii), and thus the consequence that $C$ has gonality
$\min \left\{ 2k, \Big\lfloor \frac{g+3}{2} \Big\rfloor \right\}$,  was recently proved in \cite[Cor. 5.3]{Ke}.  Up to our knowledge, the only known part  of (i)  was the existence of a base point free $g^1_{g-1}$ in the case $k=2$ (cf. \cite{Pa} and references therein, where the result is obtained for any bielliptic curve and not only for a general one).
    
    The above theorem enables us  to fully  describe the Brill-Noether theory of pencils on a general curve $C$ in $\M_{g,k}(E)$, whose $k:1$ map onto $E$ we denote by $\varphi$.
    For every $2\leq n\leq d/k$, we denote by $G^1_{d,n}(C)$ the closure in $G^1_d(C)$ of the locus of linear series $\mathfrak g$ with $s:=d-nk$ base points $p_1,\ldots,p_s$ such that $\mathfrak g(-p_1-\cdots-p_s)$ is the pullback under $\varphi$ of a $g^1_n$ on $E$. It is straightforward that $G^1_{d,n}(C)$ is irreducible of dimension
    \begin{equation}
      \label{eq:star}
      \dim G^1_{d,n}(C)= \dim G^1_n(E)+d-nk=d-(k-2)n-3;
      \end{equation}
in particular, this does not depend on $n$ in the case $k=2$ . When $ \dim G^1_{d,n}(C)\geq\rho(g,1,d)$, that is, in the range $2\leq n\leq \frac{g-1-d}{k-2}$, Theorem \ref{thm:main} implies that $G^1_{d,n}(C)$ is an irreducible component of $G^1_d(C)$. More precisely, we obtain the following results:

  \begin{corollary}\label{ye}
    Fix $k\geq 3, g\geq 2$ and let $[C]\in \M_{g,k}(E)$ be general. Then
    $C$ has gonality
    \[ \gon C= \min \left\{ 2k, \Big\lfloor \frac{g+3}{2} \Big\rfloor \right\},\]
and for every $d \geq \gon C$  one has  \begin{equation}\label{dim}\dim G^1_d(C)=\max\{\rho(g,1,d),d-2k+1\}.\end{equation}
More precisely, the following hold:
    
    \begin{itemize}
  \item[(i)] if $\rho(g,1,d)\geq 0$, the irreducible components of $G^1_d(C)$ are the components of
$G^1_d(C)^{nc}$ and the loci
$G^1_{d,n}(C)$ for $2\leq n\leq \frac{g-1-d}{k-2}$;
\item [(ii)] if $\rho(g,1,d)< 0$, the irreducible components of $G^1_d(C)$ are the loci $G^1_{d,n}(C)$  for $2\leq n\leq \frac{d}{k}$.
   \end{itemize}
  \end{corollary}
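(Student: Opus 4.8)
The plan is to deduce everything from Theorem \ref{thm:main} together with two general facts: the Brill--Noether lower bound, valid on any smooth curve, that every irreducible component $Z$ of $G^1_d(C)$ satisfies $\dim Z\geq\rho(g,1,d)$; and the lower semicontinuity of the degree of the moving part of a pencil in families (equivalently, upper semicontinuity of the base locus). The gonality statement comes first and is immediate. A pencil composed with $\varphi$ is, after removing base points, the pullback of a $g^1_n$ on $E$ with $n\geq 2$, hence of degree $\geq 2k$; a pencil whose moving part is not composed with $\varphi$ has that moving part in $G^1_{d-s}(C)^{nc}$ for some $s\geq 0$, which by Theorem \ref{thm:main}(i) (and $k\geq 3$) is nonempty only if $\rho(g,1,d-s)\geq 0$, forcing $d\geq d-s\geq\lfloor(g+3)/2\rfloor$. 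Both bounds are attained, by $\varphi^*$ of a $g^1_2$ and, when $\rho(g,1,d)\geq 0$, by Theorem \ref{thm:main}(i), so $\gon C=\min\{2k,\lfloor(g+3)/2\rfloor\}$. Here I use that a general $[C]\in\M_{g,k}(E)$ carries, up to $\Aut E$, only the cover $\varphi$ (Proposition \ref{intro}), so that ``composed'' unambiguously means composed with $\varphi$.

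To show that every component is among those listed, I would take a component $Z$ and a general $\mathfrak g\in Z$, with base divisor $B$ of degree $s$ (constant on a dense open of $Z$) and moving part $\mathfrak g(-B)$ of degree $d-s$. If $\mathfrak g(-B)$ is composed, then $\mathfrak g(-B)=\varphi^*\mathfrak h$ for a base point free $g^1_n$ on $E$ with $n=(d-s)/k\geq 2$, so $\mathfrak g\in G^1_{d,n}(C)$ and, by irreducibility and the maximality of $Z$, one gets $Z=G^1_{d,n}(C)$; the bound $\dim Z\geq\rho(g,1,d)$ combined with \eqref{eq:star} yields $d-(k-2)n-3\geq\rho(g,1,d)$, that is, $n\leq\frac{g-1-d}{k-2}$. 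If instead $\mathfrak g(-B)$ is not composed, it defines a point of $G^1_{d-s}(C)^{nc}$, which by Theorem \ref{thm:main}(ii) has dimension $\rho(g,1,d)-2s$ when nonempty; since $\mathfrak g$ is recovered from the pair $(\mathfrak g(-B),B)$, this gives $\dim Z\leq(\rho(g,1,d)-2s)+s=\rho(g,1,d)-s$, contradicting $\dim Z\geq\rho(g,1,d)$ unless $s=0$. Thus in the non-composed case $\mathfrak g$ is base point free and non-composed, so $Z$ is a component of $G^1_d(C)^{nc}$.

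For the converse I would verify that the listed loci are genuine, pairwise distinct components. Each $G^1_{d,n}(C)$ with $2\leq n\leq\frac{g-1-d}{k-2}$ has dimension $\geq\rho(g,1,d)$ by \eqref{eq:star}; since $G^1_d(C)^{nc}$ is pure of dimension $\rho(g,1,d)$ by Theorem \ref{thm:main}(ii), an inclusion $G^1_{d,n}(C)\subseteq G^1_d(C)^{nc}$ would force equality with a component of $G^1_d(C)^{nc}$, whose general member is base point free and non-composed---impossible, as the general member of $G^1_{d,n}(C)$ has composed moving part. The relation $G^1_{d,n}(C)\subseteq G^1_{d,n'}(C)$ cannot hold for $n\neq n'$ either: for $n<n'$ it is excluded by dimension, and for $n>n'$ by semicontinuity, since a general element of $G^1_{d,n}(C)$ has moving part of degree $nk>n'k$ while every element of $G^1_{d,n'}(C)$ has moving part of degree $\leq n'k$. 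The same two facts show that no component of $G^1_d(C)^{nc}$ lies in any $G^1_{d,n}(C)$. This proves (i), and the dimension formula \eqref{dim} follows by comparing $\dim G^1_d(C)^{nc}=\rho(g,1,d)$ with the largest composed-component dimension $\dim G^1_{d,2}(C)=d-2k+1$.

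It remains to treat (ii), where $\rho(g,1,d)<0$. Then $G^1_{d-s}(C)^{nc}=\emptyset$ for all $s\geq 0$, since $\rho(g,1,d-s)\leq\rho(g,1,d)<0$ and $k\geq 3$; hence no pencil has non-composed moving part, and the stratification above gives $G^1_d(C)=\bigcup_{n=2}^{\lfloor d/k\rfloor}G^1_{d,n}(C)$. The moving-part-degree argument again shows these loci are pairwise non-nested, so they are precisely the components; as $\rho(g,1,d)<0$ forces $\gon C=2k\leq d$, the largest has dimension $\dim G^1_{d,2}(C)=d-2k+1=\max\{\rho(g,1,d),d-2k+1\}$. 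The point I expect to require the most care throughout is the clean separation of the composed strata $G^1_{d,n}(C)$ from $G^1_d(C)^{nc}$---in particular, ruling out absorption in the boundary case $\dim G^1_{d,n}(C)=\rho(g,1,d)$---which is exactly where the purity in Theorem \ref{thm:main}(ii) and the semicontinuity of the base locus are essential.
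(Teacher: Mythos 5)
Your proposal is correct and takes essentially the same route as the paper's own proof: gonality and the component classification both rest on Theorem \ref{thm:main}, the case of a component whose general member has base points and non-composed moving part is excluded by the count $\dim Z\leq \rho(g,1,d)-s$, the composed components are identified with the loci $G^1_{d,n}(C)$ via \eqref{eq:star}, and non-nesting is ruled out by dimension comparison plus semicontinuity of the base locus, exactly as in the paper. The only difference is one of exposition: you make explicit several steps (the $\rho-s$ count, the separate treatment of $\rho(g,1,d)<0$) that the paper compresses into brief assertions.
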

In particular, when $k\geq 3$ the Brill-Noether varieties $ G^1_d(C)$ are not necessarily equidimensional. This fact highlights a big difference with the $k=2$ case, that we therefore state separately: 
  \begin{corollary}\label{ye2}
   Let $[C]\in \M_{g,2}(E)$ be general with $g\geq 2$. Then 
    $C$ has gonality \[ \gon C= \min \left\{ 4, \Big\lfloor \frac{g+3}{2} \Big\rfloor \right\},\] and 
    for every $d\geq \gon C$ the Brill-Noether variety $G^1_d(C)$ is equidimensional of dimension \begin{equation}\label{dim2}\dim G^1_d(C)=\max\{\rho(g,1,d),d-3\}.\end{equation}
  More precisely, the following hold:
    \begin{itemize}
    \item[(i)] if $d> g-1$, then  $G^1_d(C)$ coincides with $G^1_d(C)^{nc}$;
    \item[(ii)] if $d=g-1$,  the irreducible components of $G^1_d(C)$ are the components of $G^1_d(C)^{nc}$ and the loci $G^1_{d,n}(C)$  for $2\leq n\leq \frac{d}{2}$;
     \item[(iii)] if $d <g-1$,  the irreducible components of $G^1_d(C)$ are the loci $G^1_{d,n}(C)$ for every $2\leq n\leq \frac{d}{2}$.
    \end{itemize}
  \end{corollary}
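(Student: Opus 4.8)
The plan is to classify every irreducible component of $G^1_d(C)$ according to the behaviour of the base-point-free part of a general pencil in it, and then to read off dimensions from Theorem \ref{thm:main} and \eqref{eq:star}. The one external ingredient I will use is the classical lower bound: every irreducible component of $G^1_d(C)$ has dimension at least $\rho(g,1,d)=2d-g-2$.

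\emph{The dichotomy.} Write a pencil $\mathfrak g\in G^1_d(C)$ as the base-point-free moving part $\mathfrak m$, a $g^1_{d-b}$, twisted by its base divisor $B$ of degree $b$. Since $\deg\varphi=2$, a base-point-free pencil is composed with $\varphi$ if and only if it is a pullback $\varphi^*\mathfrak e$ of a $g^1_n$ on $E$ with $2n=d-b$, in which case $\mathfrak g\in G^1_{d,n}(C)$ with $2\le n\le d/2$. Thus for every $\mathfrak g$ exactly one of the following holds: (a) $\mathfrak m=\varphi^*\mathfrak e$, so $\mathfrak g\in G^1_{d,n}(C)$; (b) $\mathfrak m$ is not composed with $\varphi$ and $b=0$, so $\mathfrak g\in G^1_d(C)^{nc}$; or (c) $\mathfrak m$ is not composed with $\varphi$ and $b\ge 1$. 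In case (c) the moving part varies in $G^1_{d-b}(C)^{nc}$, which by Theorem \ref{thm:main} is empty unless $d-b\ge g-1$ and otherwise has dimension $\rho(g,1,d-b)$; hence the locus of pencils of type (c) with $b$ base points has dimension at most $\rho(g,1,d-b)+b=\rho(g,1,d)-b<\rho(g,1,d)$.

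\emph{Classification of components.} Let $Z$ be a component of $G^1_d(C)$, so $\dim Z\ge\rho(g,1,d)$, and apply the dichotomy to a general $\mathfrak g\in Z$. Type (c) is excluded, as it would force $\dim Z<\rho(g,1,d)$. If $\mathfrak g$ is of type (a) then $Z\subseteq G^1_{d,n}(C)$, whence $Z=G^1_{d,n}(C)$ by irreducibility and $\dim Z=d-3$ via \eqref{eq:star}; if $\mathfrak g$ is of type (b) then $Z$ is a component of $G^1_d(C)^{nc}$, of dimension $\rho(g,1,d)$. Every component is therefore a component of $G^1_d(C)^{nc}$ or one of the $G^1_{d,n}(C)$, and the regime is governed by the sign of $\rho(g,1,d)-(d-3)=d-g+1$. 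When $d>g-1$ we have $\rho(g,1,d)>d-3$, so type (a) is also excluded and $G^1_d(C)=G^1_d(C)^{nc}$ of pure dimension $\rho(g,1,d)$, which is (i). When $d<g-1$, Theorem \ref{thm:main}(i) gives $G^1_d(C)^{nc}=\emptyset$, type (b) cannot occur, and type (c) would require $d-b\ge g-1>d$; hence every pencil is of type (a), $G^1_d(C)=\bigcup_{2\le n\le d/2}G^1_{d,n}(C)$ with each summand an irreducible component of dimension $d-3$ (distinct $n$ give distinct loci of equal dimension), which is (iii). Finally, when $d=g-1$ we have $\rho(g,1,d)=d-3=g-4$, so all components have this common dimension; each $G^1_{d,n}(C)$ is genuinely a component because its general member is composed with $\varphi$ while the general member of any component of $G^1_d(C)^{nc}$ is not, and symmetrically no component of $G^1_d(C)^{nc}$ is absorbed into a $G^1_{d,n}(C)$. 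This gives (ii) together with equidimensionality. In all three cases $\dim G^1_d(C)=\max\{\rho(g,1,d),d-3\}$, and the gonality equality is the case of smallest $d$ with $G^1_d(C)\ne\emptyset$: the description shows $G^1_d(C)=\emptyset$ for $d<\gon C$ (for such $d$ the range $2\le n\le d/2$ is empty and $G^1_d(C)^{nc}$ is empty), while the pullback $\varphi^*$ of a $g^1_2$ on $E$ gives a $g^1_4$ and Theorem \ref{thm:main} a base-point-free non-composed $g^1_{g-1}$, realising $\min\{4,\lfloor(g+3)/2\rfloor\}$.

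\emph{Main obstacle.} The only genuinely delicate point is the boundary regime $d=g-1$, where $G^1_d(C)^{nc}$ and the loci $G^1_{d,n}(C)$ share the common dimension $g-4$; here one cannot separate them by dimension and must instead invoke that a general pencil in $G^1_{d,n}(C)$ is composed with $\varphi$ whereas a general pencil in each component of $G^1_d(C)^{nc}$ is not, ruling out any containment between irreducible loci of equal dimension. The identification in (a) of composed pencils with pullbacks, which underpins the whole dichotomy, rests on the primitivity of $\varphi$ via Proposition \ref{intro}.
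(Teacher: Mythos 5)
Your proof is correct and takes essentially the same route as the paper's: the same trichotomy on a general pencil of a component (moving part composed with $\varphi$, non-composed and base point free, or non-composed with base points), the same dimension counts via Theorem \ref{thm:main}, the lower bound $\dim \geq \rho(g,1,d)$ and \eqref{eq:star}, and the same base-point/composedness argument to separate components of equal dimension in the boundary case $d=g-1$. The only difference is organizational: you split the analysis explicitly by the sign of $d-(g-1)$, whereas the paper handles Corollaries \ref{ye} and \ref{ye2} in a single combined proof.
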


  When $2k<\Big\lfloor \frac{g+3}{2} \Big\rfloor$, Corollaries \ref{ye} and \ref{ye2}  (or \cite[Cor. 5.3]{Ke})  imply that a general curve $[C]\in \M_{g,k}(E)$ does not satisfy Aprodu's {\it linear growth condition} \cite{Ap}. Green's Conjecture for these curves  is  nevertheless satisfied by
the recent result \cite[Thm. 5.4]{Ke} (the case $k=3$ had been done in in \cite{AF} the case $k=2$ is trivial, as  the curve $C$ is $4$-gonal and thus Green's conjecture reads like the ideal of the canonical curve $C\subset \PP^{g-1}$ being generated by quadrics).

\section{Proofs of the results}
First of all, we prove what anticipated at the beginning of the introduction:
\begin{prop}\label{intro}
Fix $g\geq 2,k\geq 2$ and any elliptic curve $E$. Then the following hold:
\begin{itemize}
\item[(i)] a general fiber of the forgetful map $\H^{\tiny \mbox{prim}}_{g,k}(E)\to \M_{g,k}(E)$ has dimension $1$ (corresponding to the automorphisms of $E$);
\item[(ii)] if $g\geq 3$, then a general $[C]\in \M_{g,k}(E)$ does not lie in any other $\M_{g,k'}(E')$ (where $k'\geq 2$ and $E'$ is an elliptic curve possibly conciding with $E$ if  $k\neq k'$).
\end{itemize}
\end{prop}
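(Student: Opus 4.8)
The plan is to translate everything into the isogeny decomposition of the Jacobian $J(C)$. Recall that a primitive degree-$k$ cover $\varphi\colon C\to E$ induces a surjection $\varphi_*\colon J(C)\to E$ with connected kernel a $(g-1)$-dimensional abelian variety $P$ (the Prym), so that $J(C)$ is isogenous to $E\times P$. Conversely, after fixing a base point $p_0\in C$ and normalising $f(p_0)=0_E$, every nonconstant morphism $f\colon C\to E$ factors through the Abel--Jacobi map as $f=h\circ\operatorname{aj}$ for a unique $h\in\Hom(J(C),E)$, and the degree of $f$ equals $q(h)$ for the positive-definite quadratic form $q$ on $\Hom(J(C),E)$ coming from the Rosati involution. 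This dictionary will be used throughout.

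For part (i) I would argue as follows. The translation subgroup of $\Aut(E)$ acts on the set of covers $\varphi\colon C\to E$ by $\varphi\mapsto t_a\circ\varphi$, and this action is free since $t_a\circ\varphi=\varphi$ forces $a=0$; hence each fibre of the forgetful map contains a one-dimensional $\Aut(E)$-orbit, and the fibre has dimension $\geq 1$. For the reverse inequality it suffices to show that, for fixed $C$, the set of degree-$k$ covers $C\to E$ modulo translation is finite. By the dictionary above these correspond to the $h\in\Hom(J(C),E)$ with $q(h)=k$; since $\Hom(J(C),E)$ is a finitely generated free abelian group and $q$ is positive definite, such $h$ form a finite set (a de Franchis-type finiteness). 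Dividing out the finite group $\Aut(C)$ does not change dimensions, so the general fibre has dimension exactly $1$, accounted for by $\Aut(E)$; as a by-product $\dim\M_{g,k}(E)=(2g-2)-1=2g-3$.

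For part (ii) the key reduction is to a single genericity statement: for $g\geq 3$ and general $[C]\in\M_{g,k}(E)$, the only elliptic curve occurring as an isogeny factor of $J(C)$ is $E$, and it occurs with multiplicity one --- equivalently, the Prym $P$ has no elliptic isogeny factor and $\Hom(P,E)=0$. Granting this, suppose $\psi\colon C\to E'$ were a primitive cover of degree $k'$. Then $E'$ would be an elliptic factor of $J(C)$, hence isogenous to $E$; since $E$ occurs with multiplicity one, $\Hom(J(C),E')\otimes\QQ$ is one-dimensional, so every map $C\to E'$ factors up to isogeny through $\varphi$. A composition of $\varphi$ with a nonconstant isogeny $E\to E'$ is never primitive, so the only primitive cover to such an $E'$ is $\varphi$ itself, giving $(E',k')=(E,k)$. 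The hypothesis $g\geq 3$ is exactly $\dim P\geq 2$, which is what makes ``$P$ has no elliptic factor'' possible: for $g=2$ the Prym is itself an elliptic curve, producing a genuine second cover and explaining why (ii) must exclude $g=2$.

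The hard part is this genericity statement, that the general Prym $P$ has no elliptic factor, which I would establish over the irreducible base $\H^{\tiny \mbox{prim}}_{g,k}(E)$ in one of two ways. The first is by specialisation: it suffices to exhibit a single primitive degree-$k$ cover whose Prym has no elliptic factor (for instance one with $\End(P)=\ZZ$), since the locus where $J(C)$ acquires an extra elliptic factor is a countable union of proper closed (Hodge) subvarieties of the irreducible base, so a very general --- hence general --- member avoids all of them. The second, conceptually cleaner, is a monodromy argument: using simple branching one shows by Picard--Lefschetz that the monodromy on the weight-one local system formed by the $H^1(P,\QQ)$ is Zariski-dense in $\mathrm{Sp}(2g-2)$, whence the general Prym is simple with $\End(P)=\ZZ$ and, being of dimension $\geq 2$, has no elliptic factor and satisfies $\Hom(P,E)=0$. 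Proving the big-monodromy statement (or producing the explicit specialisation) is the only nonformal input; the remainder of the proposition is the bookkeeping above.
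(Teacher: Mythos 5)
Your proof of part (i) is correct and genuinely different from the paper's. You obtain finiteness of degree-$k$ covers modulo translation from the positive-definite degree form on the lattice $\Hom(J(C),E)$ (a de Franchis--Severi argument), whereas the paper deduces both (i) and (ii) from a single dimension count; both work, and yours has the merit of being soft and independent of any Severi-variety input. (One point you gloss over: the translation action is free on covers, but the Hurwitz scheme parametrizes covers up to automorphisms of the source, so you need finite stabilizers modulo $\Aut(C)$; this is fine since $\Aut(C)$ is finite for $g\geq 2$.)

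Part (ii), however, has a genuine gap: you reduce the whole proposition to the claim that for general $[C]\in\M_{g,k}(E)$ the Prym $P=(\ker\varphi_*)^0$ has no elliptic isogeny factor and $\Hom(P,E)=0$, and that claim is never proved --- as you yourself acknowledge, it is the one nonformal input, and it is the entire content of the proposition. Your route (a) requires exhibiting, for every $g\geq 3$, every $k\geq 2$ and every $E$, a primitive degree-$k$ cover with (say) $\End(P)=\ZZ$; no such cover is constructed, and constructing one is essentially as hard as the statement itself. Your route (b) requires Zariski density of the monodromy of the Prym local system in $\operatorname{Sp}(2g-2)$ over $\H^{\tiny \mbox{prim}}_{g,k}(E)$; this is asserted, not proved, and it is not an off-the-shelf result for covers of elliptic curves of arbitrary degree $k$ (the classical big-monodromy theorems for Pryms concern \'etale double covers). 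Moreover, the parenthetical inference ``very general --- hence general'' is backwards: avoiding a countable union of proper closed subvarieties is a weaker condition than lying in a dense Zariski-open subset, so even with (a) or (b) completed you would have the conclusion only for very general $[C]$. By contrast, the paper's proof is a self-contained parameter count: a second primitive cover $\varphi':C\to E'$ forces a factorization $C\stackrel{\alpha}{\to} D\stackrel{\psi}{\to} E\times E'$ with $\psi$ birational onto a curve of class $lE+l'E'$, $l,l'\geq 2$; integral genus-$h$ curves of any such class move in dimension $h$ by \cite[Prop.~4.16]{DS}, and adding the modulus of $E'$ and $\dim\H_{g,n}(D)=2g-2-n(2h-2)$ gives at most $h-1+2g-2-n(2h-2)<2g-3=\dim\M_{g,k}(E)$, uniformly in $k'$, with the single exception $n=1$, $h=g=2$ --- exactly why (ii) excludes $g=2$, as your Prym heuristic also predicts. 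That count needs no monodromy statement and no explicit example, which is precisely what your proposal lacks.
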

\begin{proof}
 Let $[\varphi:C \to E] \in \H^{\tiny \mbox{prim}}_{g,k}(E)$ be general and, by contradiction, assume the existence of another primitive simply branched map $\varphi':C\to E'$ of degree $k'\geq 2$ to an elliptic curve $E'$ (where $\varphi$ and $\varphi'$ are required to be distinct up to the automorphisms of $E$ in the case $E'=E$ and $k'=k$). We stress that both $\varphi$ and $\varphi'$ are primitive and so they are not composed one with the other. The pair $(\varphi,\varphi')$ then defines a map from $C$ to the abelian surface $E\times E'$, that can be factored as $C\stackrel{\alpha}{\longrightarrow} D\stackrel{\psi}{\longrightarrow}E\times E'$, where $D$ is a curve of genus $h\leq g$ (with $D=C$ when $h=g$), $\alpha$ is a finite map of degree $1\leq n<\min\{k,k'\}$ dividing both $k$ and $k'$, and the map $\psi$ is birational. In particular, the image $\psi (D)$ is a curve of numerical class $lE+l'E'$ with $l=k/n\geq 2$ and $l'=k'/n\geq 2$. By \cite[Prop. 4.16]{DS}, integral genus $h$ curves in $E\times E'$ of this numerical class move in dimension $h$. Modding out by the automorphisms of $E\times E'$, we obtain a family of dimension $\leq h-2$ of curves $D$ of genus $h\geq 2$ admitting a birational map to a curve of class $lE+l'E'$ in $E\times E'$. Letting $E'$ vary in moduli and denoting by $\H_{g,n}(D)$ the Hurwitz scheme of genus $g$ and degree $n$ simply branched covers of a curve $D$ as above, one computes that curves $C$ admitting a pair $(\varphi,\varphi')$ as described move in dimension at most
$$1+h-2+\dim \H^{\tiny \mbox{prim}}_{g,n}(D)=h-1+2g-2-n(2h-2),$$
which is $<2g-3=\dim\M_{g,k}(E)$ unless $n=1$ and $h=g=2$. 
\end{proof}

To prove our main results, we now consider the surface

$T:=E \x \PP^1$ and let $R$ and $E$ denote the classes of the obvious sections of
$T$, so that $K_T \equiv -2E$, 

Let $V_g(k,d)$ denote the Severi variety of irreducible nodal curves of genus $g$ on $T$ of class $kE+dR$. Similarly, we denote by $V^g(k,d)$ the equigeneric locus of integral curves of genus $g$ on $T$ of class $kE+dR$, so that one has an obvious inclusion $V_g(k,d)\subset V^g(k,d)$. Let $\M_g(k,d)$ be the coarse moduli space of genus $g$ stable maps with image of class $kE+dR$, and denote by $\M_g(k,d)^{\mathrm{sm}}$ the closure of the locus of maps in $\M_g(k,d)$ that are smoothable, that is, can be deformed to a map from a nonsingular curve, birational onto its image (cf. \cite{Va}). By sending a stable map $f:C\to T$ to its image $f(C)=\overline{C}$, one obtains a dominant morphism from the semi-normalization of $\M_g(k,d)^{\mathrm{sm}}$ to $\overline{V^g(k,d)}$ \cite[I.6]{Ko1}. The following result is of independent interest (part (ii) was aready proved in \cite[Prop. 5.2]{Ke}):
\begin{prop}\label{facile}
For every $1 \leq g \leq d(k-1)+1$ the following hold:
\begin{itemize}
\item[(i)] the variety $V_g(k,d)$ is nonempty, smooth of dimension $2d+g-1$ and dense in the equigeneric locus $V^g(k,d)$;
\item[(ii)] $\M_g(k,d)^{sm}$ is generically reduced of dimension $2d+g-1$.
\end{itemize}
\end{prop}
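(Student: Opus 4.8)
The whole proof will rest on one cohomological vanishing, which on $T$ is forced by the positivity of $-K_T$. Write $D:=kE+dR$; from $E^2=R^2=0$, $E\cdot R=1$ and $K_T\equiv -2E$ one gets $-K_T\cdot D=2d$, $D^2=2kd$ and $p_a(D)=d(k-1)+1$, so the arithmetic genus of $D$ equals the upper bound in the statement and the expected dimension of both families is $-K_T\cdot D+g-1=2d+g-1$. Let $C$ be an integral curve of geometric genus $g$ in the class $D$ and $\nu\colon\tilde C\to C\subset T$ its normalization, giving a map $f\colon\tilde C\to T$ from a smooth genus-$g$ curve, birational onto its image. When $f$ is an immersion — in particular when $C$ is nodal — the normal sheaf $N_f=\coker(T_{\tilde C}\to f^*T_T)$ is a line bundle of degree $-K_T\cdot D-(2-2g)=2d+2g-2$. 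Since $2d>0$, this degree is strictly larger than $2g-2$, whence $H^1(\tilde C,N_f)=0$ and $h^0(N_f)=\chi(N_f)=2d+g-1$. This is the engine of everything that follows.

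From $H^1(N_f)=0$ I read off smoothness and dimension. For (ii), the deformation theory of stable maps gives that $\M_g(k,d)$ is smooth at such an $[f]$, of dimension $h^0(N_f)=2d+g-1$. For (i), using the standard comparison $H^i(C,N'_{C/T})=H^i(\tilde C,N_f)$ between the equisingular normal sheaf of the nodal curve $C$ and the normal sheaf of its normalization, the same vanishing reads $H^1(C,N'_{C/T})=0$, so $V_g(k,d)$ is smooth at $[C]$ of dimension $h^0(N'_{C/T})=h^0(N_f)=2d+g-1$. Moreover, the cohomology sequence of $0\to N'_{C/T}\to N_{C/T}\to\bigoplus_{\mathrm{nodes}}\CC\to 0$ shows that the node-smoothing map $H^0(N_{C/T})\to\bigoplus_{\mathrm{nodes}}\CC$ is surjective; hence the $\delta=p_a(D)-g$ nodes of $C$ can be smoothed independently, and each $V_g(k,d)$ with $g<p_a(D)$ lies in the closure of $V_{g+1}(k,d)$.

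It therefore suffices to exhibit a single curve of the minimal genus $g=1$. I will take an elliptic curve $C_0$, a connected étale degree-$k$ cover $\varphi\colon C_0\to E$ (so $C_0$ is again elliptic), and a base-point-free $g^1_d$, i.e. a degree-$d$ morphism $\psi\colon C_0\to\PP^1$, which exists as soon as $d\geq 2$. As $\varphi$ is unramified, $j:=(\varphi,\psi)\colon C_0\to T$ is an immersion; for general $\psi$ it is birational onto its image $C$, which then has class $kE+dR$ and geometric genus $1$. A general-position argument — moving $\psi$ in the positive-dimensional linear series and translating $\varphi$ — makes $C$ nodal, so $V_1(k,d)\neq\emptyset$. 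Smoothing nodes keeps an irreducible curve irreducible, so the mechanism of the previous paragraph produces integral nodal curves, and hence smooth nonempty $V_g(k,d)$ of pure dimension $2d+g-1$, for every $1\leq g\leq d(k-1)+1$.

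Finally I compare $V_g(k,d)$ with the full equigeneric locus $V^g(k,d)$. The key input is the structural fact that the general member of every irreducible component of an equigeneric family on a surface is nodal; granting this, each component of $V^g(k,d)$ meets $V_g(k,d)$ in a dense open subset, and since $V_g(k,d)$ is smooth of dimension $2d+g-1$ it is dense in $V^g(k,d)$, which then has the same pure dimension. This proves (i). For (ii) I invoke the dominant, generically one-to-one morphism from (the semi-normalization of) $\M_g(k,d)^{\mathrm{sm}}$ onto $\overline{V^g(k,d)}$ recalled above: it transports the dimension count, while the smoothness at the immersions $[f]$ with $H^1(N_f)=0$ shows that $\M_g(k,d)^{\mathrm{sm}}$ is reduced at its general points; thus $\M_g(k,d)^{\mathrm{sm}}$ is generically reduced of pure dimension $2d+g-1$, recovering \cite[Prop. 5.2]{Ke}. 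I expect the genuine difficulties to lie precisely in this last paragraph — the claim that the general member of each component of the equigeneric locus is nodal — and in the companion general-position statement that the genus-$1$ map $j$ of the third paragraph has nodal image; the normal-sheaf vanishing that drives the dimension and smoothness is, by contrast, automatic from $-K_T\cdot D=2d>0$.
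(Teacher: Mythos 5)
Your proof is correct and takes essentially the same route as the paper's: the positivity $-K_T\cdot(kE+dR)=2d>0$ forces $H^1(N_f)=0$ (equivalently $h^1$ of the equisingular normal sheaf vanishes), which yields smoothness and the dimension $2d+g-1$; nonemptiness is obtained from a genus-one curve given by a degree-$k$ cover of $E$ together with a general pencil, propagated to all $1\leq g\leq d(k-1)+1$ by independent smoothing of nodes; and the remaining structural inputs (density of nodal curves in each equigeneric component, generic reducedness of $\M_g(k,d)^{sm}$ via unramifiedness of the general stable map) are imported from \cite{CH} and \cite{DS}, exactly as in the paper. The only cosmetic difference is that you build a nodal genus-one curve directly by a general-position argument, whereas the paper just observes $\M_1(k,d)^{sm}\neq\emptyset$ and lets the quoted density statement produce a point of $V_1(k,d)$.
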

\begin{proof}
 Since $-K_T \cdot (kE+dR) =2d \geq 4$, it is well-known that $\M_g(k,d)^{sm}$  and $V^g(k,d)$ are generically reduced (as a general stable map in any irreducible component is unramified) and equidimensional of dimension $2d+g-1$ as soon as they are nonempty, and that $V_g(k,d)$ is dense in
 $V^g(k,d)$, cf., e.g., \cite[proof of Prop. 2.2]{CH}. For $[\overline{C}]\in V_g(k,d)$, we denote by $N$ the scheme of nodes of $\overline{C}$ and by $\nu:C\to  \overline{C}\subset T$ the composition of the normalization map with the inclusion of $\overline{C}$ in $T$. Since the tangent space of $V_g(k,d)$ at the point $[\overline{C}]$ can be identified with $H^0(T,\mathcal O_T(\overline{C})\otimes \I_N)$ and this is isomorphic to the $2d+g-1$-dimensional space $H^0( C, \N_\nu)=H^0(C, \omega_{C}\otimes \nu^*(-K_T))$ \cite[\S 3.1]{DS}, we conclude that $V_g(k,d)$ is smooth. As a consequence, to prove nonemptiness we first note that $\M_1(k,d)^{sm}$ is nonempty
  (just pick any smooth elliptic curve admitting a degree $k$ map onto $E$, together with a general $g^1_d$ on it). It follows that also $V_1(k,d)$ is nonempty and its smoothness implies that the nodes of the curves can be smoothed independently; as a consequence, $V_g(k,d)$ is nonempty for all $1 \leq g \leq p_a(kE+dR)=d(k-1)+1$.
\end{proof}

In the sequel we will make use of the following vanishing:

\begin{lemma}\label{claimA}
Let $N$  be the scheme of the $\delta:=d(k-1)+1-g$ nodes of a curve $\overline{C}$ in $V_g(k,d)$. Then
$$
h^0(T,\mathcal O_T(\overline{C}+K_T)\otimes \I_N)=g-1.$$
\end{lemma}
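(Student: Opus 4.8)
The plan is to exhibit $\mathcal O_T(\overline C+K_T)\otimes\I_N$ as the middle term of a short exact sequence whose outer terms have transparent cohomology, and then to reduce the whole statement to the nonvanishing of a single connecting map. Note first that $\overline C\equiv kE+dR$ and $K_T\equiv -2E$, so $\mathcal O_T(\overline C+K_T)$ has class $(k-2)E+dR$; and since $\overline C$ is irreducible, $C$ is smooth irreducible of genus $g$ and the projection induces a finite degree-$k$ cover $\varphi\colon C\to E$. Writing $\nu\colon C\to\overline C\subset T$ for the normalization and using that the conductor of a node equals the ideal of that node, the conductor of $N$ in $\overline C$ is $\I_{N/\overline C}$. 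The standard identification $\nu_*\omega_C=\omega_{\overline C}\otimes\I_{N/\overline C}$ together with adjunction $\omega_{\overline C}=\mathcal O_{\overline C}(\overline C+K_T)$ then produces
\begin{equation*}
0\longrightarrow \mathcal O_T(K_T)\longrightarrow \mathcal O_T(\overline C+K_T)\otimes\I_N\longrightarrow \nu_*\omega_C\longrightarrow 0,
\end{equation*}
the right arrow being restriction to $\overline C$ followed by the conductor identification; exactness is checked locally at a node $\overline C=\{xy=0\}$, where $\I_N=(x,y)$ and the kernel of the restriction is $(xy)\cdot\mathcal O_T(\overline C+K_T)=\mathcal O_T(K_T)$.

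Next I would pass to cohomology. Since $K_T\equiv-2E$ and $E$ is pulled back from $\PP^1$ under the projection $p_2\colon T\to\PP^1$, we have $\mathcal O_T(K_T)=p_2^{\ast}\mathcal O_{\PP^1}(-2)$, so K\"unneth gives $H^0(T,\mathcal O_T(K_T))=0$ and $H^1(T,\mathcal O_T(K_T))\simeq\CC$ (the latter reflecting the irregularity $q(T)=h^1(\mathcal O_T)=1$). As $\nu$ is finite, $H^0(\overline C,\nu_*\omega_C)=H^0(C,\omega_C)$ has dimension $g$. The long exact sequence thus begins
\begin{equation*}
0\longrightarrow H^0\!\big(\mathcal O_T(\overline C+K_T)\otimes\I_N\big)\longrightarrow H^0(C,\omega_C)\stackrel{\partial}{\longrightarrow} H^1(T,\mathcal O_T(K_T))\simeq\CC,
\end{equation*}
whence $h^0\!\big(\mathcal O_T(\overline C+K_T)\otimes\I_N\big)=g-\operatorname{rank}\partial$. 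The lemma is therefore equivalent to the assertion that $\partial\neq 0$.

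The crux is to establish $\partial\neq 0$, and here I would combine naturality of connecting maps with Serre duality. The inclusion $\nu_*\omega_C\hookrightarrow\omega_{\overline C}$ lifts the sequence above to the ordinary adjunction sequence $0\to\mathcal O_T(K_T)\to\mathcal O_T(\overline C+K_T)\to\omega_{\overline C}\to 0$ inducing the identity on $\mathcal O_T(K_T)$; consequently $\partial$ factors as $H^0(C,\omega_C)\hookrightarrow H^0(\overline C,\omega_{\overline C})\stackrel{\partial'}{\to} H^1(T,\mathcal O_T(K_T))$, with $\partial'$ the adjunction coboundary. By Serre duality on $T$ the map $\partial'$ is dual to the restriction $H^1(T,\mathcal O_T)\to H^1(\overline C,\mathcal O_{\overline C})$, so that $\partial$ itself is dual to the pullback $\nu^{\ast}\colon H^1(T,\mathcal O_T)\to H^1(C,\mathcal O_C)$. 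Finally, writing $p_1\colon T\to E$ for the other projection, $p_1^{\ast}\colon H^1(E,\mathcal O_E)\to H^1(T,\mathcal O_T)$ is an isomorphism by K\"unneth and $p_1\circ\nu=\varphi$ (here $\varphi$ has degree $k$ because $R$ is the fiber class of $p_1$ and $\overline C\cdot R=k$), so this pullback is identified with $\varphi^{\ast}\colon H^1(E,\mathcal O_E)\to H^1(C,\mathcal O_C)$. As $\varphi$ is a finite cover of degree $k$ over a field of characteristic zero, the trace map gives $\varphi_{\ast}\varphi^{\ast}=k\cdot\operatorname{id}$, so $\varphi^{\ast}$ is injective, in particular nonzero. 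Thus $\partial\neq 0$ and $h^0\!\big(\mathcal O_T(\overline C+K_T)\otimes\I_N\big)=g-1$.

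I expect the main obstacle to be the duality step, namely justifying that $\partial$ is Serre-dual to the pullback on $H^1(\mathcal O)$, which is slightly delicate because the quotient $\nu_*\omega_C$ is not locally free. I would circumvent this by working with the genuinely locally free adjunction sequence on the Gorenstein curve $\overline C$, where Serre duality on $T$ and on $\overline C$ both apply, and invoking the compatibility of connecting homomorphisms with duality. Granting this, the nonvanishing is forced purely by the existence of the degree-$k$ cover $\varphi\colon C\to E$, and this is exactly where the positive irregularity of the elliptic ruled surface $T$ enters: without it the conditions imposed by $N$ on the adjoint system $|\overline C+K_T|$ could a priori fail to be independent by exactly one, yielding $h^0=g$ instead of $g-1$, and $\partial\neq 0$ is precisely what excludes this.
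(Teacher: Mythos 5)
Your proof is correct, and it takes a genuinely different route from the paper's. The paper blows up the nodes, $\mu_N:\widetilde{T}_N\to T$, identifies the space in question with $H^0(\O_{\widetilde{T}_N}(C+K_{\widetilde{T}_N}))$ for the strict transform $C$ of $\overline{C}$, asserts that $C$ is big and nef, and concludes $h^0=\chi=\tfrac12 C\cdot(C+K_{\widetilde{T}_N})=g-1$ by Kawamata--Viehweg vanishing together with Riemann--Roch and adjunction. You instead stay on $T$, use the conductor (adjoint-ideal) sequence $0\to\O_T(K_T)\to\O_T(\overline{C}+K_T)\otimes\I_N\to\nu_*\omega_C\to0$, reduce the lemma to the nonvanishing of the coboundary $\partial:H^0(\omega_C)\to H^1(\O_T(K_T))\cong\CC$, and prove $\partial\neq 0$ by identifying its Serre dual with $\nu^*:H^1(\O_T)\to H^1(\O_C)$, hence with $\varphi^*:H^1(\O_E)\to H^1(\O_C)$, which is split injective via the trace map since $\varphi$ is finite of degree $k$ in characteristic zero. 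The duality compatibilities you flag as the delicate point are standard (naturality of Serre--Grothendieck duality with respect to connecting maps, plus duality for the finite morphism $\nu$), and sign ambiguities are harmless because only nonvanishing is needed; so there is no gap. What your route buys is robustness: it needs no positivity whatsoever. This is a real advantage, because the paper's positivity claim rests on the computation $C^2=\overline{C}^2-\delta$, whereas the strict transform of a $\delta$-nodal curve satisfies $C^2=\overline{C}^2-4\delta$; with the correct value, $C$ can fail to be big and nef within the lemma's stated range --- notably for the genus-one curves of class $(k-2)E+dR$ with $k\geq 5$, which is exactly the situation in which Lemma \ref{claimA} is invoked in the proof of Proposition \ref{prop:severi}. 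Your trace observation covers all cases uniformly, and is in fact precisely what repairs the blow-up argument: by Serre duality $h^1(\O_{\widetilde{T}_N}(C+K_{\widetilde{T}_N}))=h^1(\O_{\widetilde{T}_N}(-C))$, which, as $C$ is integral, equals the dimension of the kernel of the pullback $H^1(\O_{\widetilde{T}_N})\to H^1(\O_C)$, and that kernel is zero by your argument since this pullback is identified with $\varphi^*$ on $H^1(\O_E)\cong\CC$. What the paper's route buys, in the cases where the positivity does hold, is brevity: two lines of intersection theory plus a standard vanishing theorem.
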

\begin{proof}
 Let $\mu_N:\widetilde{T}_N \to T$ be the blow up at $N$, with total exceptional divisor $\e$, and let $C$ be the normalization of $\overline{C}$, also coinciding with the strict transform of $\overline{C}$ under $\mu_N$. Since $C^2=\overline{C}^2-\delta=2kd-\left(d(k-1)+1-g\right)= kd+d+g-1>0$,  we have that $C$ is big and nef, so that
    \[ h^0(\O_{\widetilde{T}_N}(C+K_{\widetilde{T}_N}))=\chi(\O_{\widetilde{T}_N}(C+K_{\widetilde{T}_N}))=\frac{1}{2}C\cdot\left(C+K_{\widetilde{T}_N}\right)=g-1.\]
 But $C+K_{\widetilde{T}_N} \sim \mu_N^*\overline{C}-2\e+\mu_N^*K_T+\e=\mu_N^*(\overline{C}+K_T)-\e$, and the result follows.  
\end{proof}

The following result is used to specialize the nodes of curves in $V_g(k,d)$ to the nodes of some reducible curves.  
 \begin{lemma}\label{claimB}
Fix $g \geq 5$, $k\geq 3$, $d\geq (g-1)/2$ and two integers $l_1,l_2$ such that $l_1+l_2=2d+1-g$ and $0\leq l_i\leq d-2$ for $i=1,2$. Let $$f:\widetilde X=\widetilde D\cup_{Z_1} E_1\cup_{Z_2} E_2\to X=D\cup E_1\cup E_2\subset T$$ be a genus $g$ stable map such that $\widetilde D$ is a smooth elliptic curve of genus $1$ with image $D\equiv (k-2)E+dR$,  the curves $E_1,E_2$ are mapped isomorphically to elements of $|E|$, and $\widetilde D$ and $E_i$ intersect transversally at $d-l_i$ points denoted by $Z_i$ for $i=1,2$. Then $[f]\in \M_g(k,d)^{sm}$ and $D+E_1+E_2$ lies in the closure of $V_g(k,d)$.   \end{lemma}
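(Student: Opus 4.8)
The plan is to deduce both assertions from one deformation-theoretic statement: that the stable map $f$ is \emph{smoothable}, i.e. can be deformed, within $\M_g(k,d)$, to a map from a smooth connected curve birational onto its image. Granting this, $[f]\in\M_g(k,d)^{sm}$ by definition; and since the morphism of Proposition \ref{facile} sends $[f]$ to its image $D+E_1+E_2$ and lands in $\overline{V^g(k,d)}=\overline{V_g(k,d)}$ (using the density of $V_g(k,d)$ in $V^g(k,d)$), the curve $D+E_1+E_2$ lies in $\overline{V_g(k,d)}$. So everything reduces to analysing the deformations of $f$ near $[f]$, and in particular to showing that $[f]$ is unobstructed and that its generic nearby deformation smooths all $g-1$ nodes of $\widetilde X$.

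First I would write down the normal sheaf $\N_f$ of the stable map (the rank-one sheaf governing $T_{[f]}\M_g(k,d)$ and its obstructions, with the node-smoothing contributions built in), whose restriction to each component $Y$ of $\widetilde X$ is the normal bundle of $f|_Y$ twisted up by the nodes of $\widetilde X$ lying on $Y$: concretely $\N_f|_Y\cong \omega_Y\otimes f^*\O_T(-K_T)|_Y\otimes\O_Y(\text{nodes on }Y)$, via the identification $N_\nu\cong\omega\otimes\nu^*(-K_T)$ recalled in the proof of Proposition \ref{facile}. Since $-K_T\equiv 2E$, on the elliptic curve $\widetilde D$ (mapping to $D\equiv(k-2)E+dR$) this bundle has degree $0+2E\cdot D+\#(Z_1\cup Z_2)=2d+(g-1)$, while on each $E_i\cong E$ the untwisted factor $\omega_{E_i}\otimes f^*\O_T(2E)|_{E_i}$ is trivial of degree $0$ (as $E^2=0$), so $\N_f|_{E_i}\cong\O_{E_i}(Z_i)$ has degree $d-l_i\geq 2$. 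All three degrees are positive, hence $H^1$ of each restriction vanishes.

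To pass to $\widetilde X$ I would use the normalization sequence
\[
0\longrightarrow \N_f\longrightarrow \N_f|_{\widetilde D}\oplus \N_f|_{E_1}\oplus \N_f|_{E_2}\longrightarrow \bigoplus_{p\in Z_1\cup Z_2}\CC\longrightarrow 0,
\]
the last arrow being the difference of residues at the $g-1$ nodes. Since $H^1$ vanishes on each component, $H^1(\N_f)$ is the cokernel of this residue map; but already the $\widetilde D$-summand surjects onto $\bigoplus_p\CC$, because $H^1(\widetilde D,\omega_{\widetilde D}\otimes f^*(-K_T)|_{\widetilde D})=0$ (this bundle has positive degree $2d$). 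Hence $H^1(\N_f)=0$, the point $[f]$ is unobstructed, and $\M_g(k,d)$ is smooth there of dimension $\chi(\N_f)=2d+g-1$, matching Proposition \ref{facile}.

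The main obstacle is the last step: checking that a \emph{general} first-order deformation actually smooths \emph{all} $g-1$ nodes simultaneously, so that the domain becomes smooth and connected of genus $g$ and the image irreducible. A deformation smooths the node $p$ precisely when the corresponding (common) residue there is nonzero, so I must arrange these residues to be simultaneously nonvanishing. The residues coming from $\widetilde D$ can be prescribed arbitrarily (again because $H^1$ vanishes on $\widetilde D$), but they must be matched by sections on the rigid elliptic tails $E_i$, whose trivial untwisted normal bundle is the source of the difficulty. Here the hypothesis $l_i\leq d-2$ enters: it forces $\deg\O_{E_i}(Z_i)=d-l_i\geq 2$, so that each point of $Z_i$ imposes an independent condition and the image of $H^0(\O_{E_i}(Z_i))\to\bigoplus_{p\in Z_i}\CC$ is not contained in any coordinate hyperplane; consequently the achievable common-residue vectors are not confined to any $\{\text{res}_p=0\}$, and the generic deformation has all residues nonzero. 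Integrating it (possible since $[f]$ is unobstructed) yields a map from a smooth genus-$g$ curve, birational onto an irreducible nodal curve of class $kE+dR$ with the expected $\delta=d(k-1)+1-g$ nodes, i.e. a point of $V_g(k,d)$ degenerating to $D+E_1+E_2$. This gives $[f]\in\M_g(k,d)^{sm}$ and $D+E_1+E_2\in\overline{V_g(k,d)}$.
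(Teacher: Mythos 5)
Your argument is correct in substance, but it proceeds along a genuinely different route from the paper's. The paper never touches the normal sheaf of $f$: it smooths the two elliptic tails one at a time by a dimension count. Attaching $E_1$ alone gives a point $[f_1]\in\M_{1+d-l_1}(k-1,d)$; every component of that space has dimension at least the expected one, $3d-l_1$, whereas stable maps of the same reducible shape as $f_1$ move in at most $\dim\left(\M_1(k-2,d)^{sm}\times\M_1(1,0)\right)=2d+1$ dimensions by Proposition \ref{facile}. Since $3d-l_1>2d+1$ precisely when $l_1\leq d-2$, the map $f_1$ deforms off the reducible locus, hence is smoothable; the identical count with $E_2$ attached to a general deformation of $f_1$ (this is where $l_2\leq d-2$ enters) finishes the proof, smoothability of $f$ itself following by specialization --- which is why the paper first reduces to $[f_D]$ general in its component of $\M_1(k-2,d)^{sm}$. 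You instead prove unobstructedness directly ($H^1(\N_f)=0$ via the normalization sequence and positivity on the three elliptic components) and then smooth all nodes at once, using the criterion that a first-order deformation smooths a node if and only if the corresponding section of $\N_f$ is nonvanishing there, with $l_i\leq d-2$ entering as base-point-freeness of the degree-$(d-l_i)$ bundle $\O_{E_i}(Z_i)$ on an elliptic curve. Your route proves more (smoothness of $\M_g(k,d)$ at $[f]$ of the expected dimension $2d+g-1$, and explicit control of which nodes are smoothed); the paper's route is softer and needs no normal-sheaf identities.

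Three points should be tightened. (a) The identification of $\N_f$ as a line bundle with $\N_f|_Y\cong \omega_Y\otimes f^*(-K_T)|_Y(\mbox{nodes on } Y)$, computing both tangent and obstruction spaces, requires $f$ to be unramified. The hypotheses guarantee this at the nodes (transversality), but $D$ is only assumed to be the image of a smooth elliptic curve, so $f_D$ could be ramified (i.e.\ $D$ could have cusps), in which case $\N_f$ acquires torsion and your restriction formula fails. As written, your argument proves the lemma when $f_D$ is an immersion; the general statement then needs the same reduction the paper makes (deform $[f_D]$ to a general, hence unramified, point of its component of $\M_1(k-2,d)^{sm}$ and use that $\M_g(k,d)^{sm}$ is closed), although the immersed case is the one actually used in Proposition \ref{prop:severi}. (b) Membership in $\M_g(k,d)^{sm}$ also requires the smoothed maps to be birational onto their images; this is true but you assert it without proof --- a one-line argument suffices (if nearby maps had degree $m\geq 2$ onto their images, the limit cycle $D+E_1+E_2$ would have all multiplicities divisible by $m$). (c) Your parenthetical ``each point of $Z_i$ imposes an independent condition'' is not literally correct: evaluation of $H^0(\O_{E_i}(Z_i))$ at all of $Z_i$ has corank exactly $1$, since $h^1(\O_{E_i})=1$. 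What you need, and what degree $d-l_i\geq 2$ does give, is precisely base-point-freeness, i.e.\ that the image of the evaluation map lies in no coordinate hyperplane; with that phrasing the rest of your argument goes through.
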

\begin{proof}
By Proposition \ref{facile}, it is enough to show that $f$ is smoothable. We denote by $f_{D}:=f|_{\widetilde D}:\widetilde D\to D\subset T$, $f_{E_i}:E_i\to E_i\subset T$ for $i=1,2$ and $f_1:=f|_{\widetilde D\cup_{Z_1} E_1}:\widetilde D\cup_{Z_1} E_1\to D\cup E_1\subset T$ the restrictions of $f$. It is enough to prove that $[f]\in\M_g(k,d)^{sm}$ when $[f_D]$ is general  in any irreducible component of $\M_1(k-2,d)^{sm}$.
The map $f_1$ defines a point of $\M_{1+d-l_1}(k-1,d)$, whose expected dimension is $3d-l_1$. As soon as $l_1\leq d-2$, one has $3d-l_1>\dim \left(\M_1(k-2,d)^{sm}\times \M_1(1,0)\right)=2d+1$, where the equality follows from Proposition \ref{facile}; as a consequence, the map $f_1$ is smoothable, that is, it lies in a component $\widehat\M$ of $\M_{1+d-l_1}(k-1,d)^{sm}$. Take now a genus $g$-stable map $h: \widetilde Y\cup_ZE\to Y\cup E\subset T$ such that $h_Y:=h|_{\widetilde Y}$ corresponds to a general point of $\widehat\M$, the restriction $h_E:=h|_{E}$ defines a point of $\M_1(1,0)$ and $Z:=Y\cap E$ consists of $d-l_2$ nodes. Then $h$ defines a point of $\M_g(k,d)$, which has expected dimension equal to $2d+g-1$. Since $2d+g-1>\dim\left(\widehat\M\times \M_1(1,0)\right)=3d-l_1+1$ for $l_2\leq d-2$, we conclude that the map $h$ is smoothable and thus the same holds for $f$.\end{proof}

\begin{prop} \label{prop:severi}
For every $2 \leq g \leq d(k-1)+1$ and $d \geq 2k$, the forgetful map $\psi_{g,k,d}: \M_g(k,d)^{sm} \dashrightarrow \M_g$ dominates $\M_{g,k}(E)$ when $\rho(g,1,d) \geq 0$. 
\end{prop}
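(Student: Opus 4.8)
The plan is to show that a general degree-$k$ cover $\varphi\colon C\to E$ arises as the normalization of a nodal curve in $V_g(k,d)$, together with a $g^1_d$ cut out by the ruling of $T=E\times\PP^1$. Recall that the projection $T\to\PP^1$ restricts on any integral curve $\overline C\equiv kE+dR$ to a degree-$d$ map to $\PP^1$, i.e.\ a $g^1_d$ on the normalization $C$, while the projection $T\to E$ restricts to a degree-$k$ cover $C\to E$. Thus every point of $V_g(k,d)$ produces a curve $C$ equipped simultaneously with a $k:1$ map to $E$ and a $g^1_d$, and the forgetful map $\psi_{g,k,d}$ sends $[\overline C]$ to the isomorphism class $[C]\in\M_g$. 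The goal is to prove that, as $[\overline C]$ varies over $V_g(k,d)$, the resulting curves $C$ sweep out a dense subset of $\M_{g,k}(E)$.

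The key step is a dimension count combined with a check that the image lands in $\M_{g,k}(E)$ and is genuinely $2g-3$-dimensional there. First I would observe that for a general $[\overline C]\in V_g(k,d)$ the induced cover $C\to E$ is primitive and simply branched, so that $[C]$ really lies in $\M_{g,k}(E)$; primitivity should follow from generality since a factorization through a smaller cover would confine $\overline C$ to a proper subvariety of the Severi variety. Next, by Proposition~\ref{facile}(ii) the space $\M_g(k,d)^{\mathrm{sm}}$ is irreducible (or at least one should argue irreducibility of the relevant component) of dimension $2d+g-1$, and the general fiber of $\psi_{g,k,d}$ over its image can be identified with the choice of $g^1_d$ on $C$ together with the automorphisms of $E$. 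Since $\rho(g,1,d)\geq 0$ and, by the forward direction which we may take as known, the general cover carries a positive-dimensional family of such pencils, one computes the fiber dimension and subtracts from $2d+g-1$ to obtain the dimension of the image. The arithmetic should yield exactly $2g-3=\dim\M_{g,k}(E)$, and combined with irreducibility of $\M_{g,k}(E)$ this forces dominance.

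Concretely, the fiber of $\psi_{g,k,d}$ over a general $[C]$ parametrizes integral curves in $T$ of class $kE+dR$ whose normalization is $C$; this is the datum of a $k:1$ map $C\to E$ (unique up to $\Aut E$ by the discussion preceding Theorem~\ref{thm:main}, a $1$-dimensional choice) and a base-point-free $g^1_d$ on $C$ together with an identification of the ruling, so the fiber has dimension $\dim G^1_d(C)^{nc}+\dim\PGL_2+\dim\Aut(E,\mathrm{as\ cover})$. Here I would invoke Theorem~\ref{thm:main}(ii): the non-composed locus $G^1_d(C)^{nc}$ has dimension $\rho(g,1,d)$. Balancing $2d+g-1=\dim(\text{image})+\dim(\text{fiber})$ with $\dim(\text{fiber})=\rho(g,1,d)+2k$ (accounting for the $3$-dimensional $\PGL_2$ of the $\PP^1$, the relation $\rho+2k$ absorbing the cover data via the surface automorphisms) should reproduce $2g-3$ on the nose, given $\rho(g,1,d)=2d-g-2$.

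The main obstacle will be the bookkeeping of the fiber dimension: one must carefully separate the genuine moduli of the abstract curve $C$ from the auxiliary data (the cover to $E$, the map to $\PP^1$, the automorphisms of $T=E\times\PP^1$) so as not to double-count. In particular I expect the delicate point to be verifying that the general member of the dominating component of $\M_g(k,d)^{\mathrm{sm}}$ indeed has $C\to E$ \emph{primitive}, rather than factoring through an intermediate elliptic cover, and that the $g^1_d$ is \emph{not composed with} $\varphi$; the former can be handled using Proposition~\ref{intro}(ii) for $g\geq 3$, while the hypothesis $d\geq 2k$ together with $\rho\geq 0$ should guarantee the existence of non-composed pencils of the expected dimension. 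Once these genericity statements are in place, the equality of dimensions together with the irreducibility of both source and target yields the asserted dominance of $\M_{g,k}(E)$.
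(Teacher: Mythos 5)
Your framework---compute the general fiber dimension of $\psi_{g,k,d}$ and subtract it from $\dim \M_g(k,d)^{sm}=2d+g-1$---is the same as the opening of the paper's proof, but the step where you actually bound the fiber has a genuine gap, and it is exactly the point where all the work lies. You bound the fiber by invoking Theorem \ref{thm:main}(ii), i.e.\ $\dim G^1_d(C)^{nc}=\rho(g,1,d)$. That statement concerns a \emph{general} cover $[\varphi]\in \H^{\tiny \mbox{prim}}_{g,k}(E)$, whereas your count needs it for the curve $C$ underlying a general point of a component of $\M_g(k,d)^{sm}$, equivalently for a general curve in the \emph{image} of $\psi_{g,k,d}$. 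Until dominance is proved, that image could be a proper closed subvariety of $\M_{g,k}(E)$, whose general member is special and may carry an excess-dimensional family of non-composed pencils; in that scenario the fibers are larger than $\rho(g,1,d)+4$, the image has dimension less than $2g-3$, and the count collapses. So the appeal to Theorem \ref{thm:main}(ii) (and likewise your appeal to Proposition \ref{intro}(ii) for primitivity) assumes precisely the genericity that dominance is supposed to establish. On top of this there is an outright circularity: ``the forward direction which we may take as known, the general cover carries a positive-dimensional family of such pencils'' is the nonemptiness part of Theorem \ref{thm:main}(i) in the range $d\geq 2k$, which the paper deduces \emph{from} Proposition \ref{prop:severi}.

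What is missing is an upper bound on the fiber dimension valid at a general point of $\M_g(k,d)^{sm}$ itself, and this is the technical core of the paper's proof: the kernel of $d\psi_{g,k,d}$ at $[f]$ is identified with $H^0(f^*\T_T)\cong H^0(\O_C)\oplus H^0(\O_C(2A))$, of dimension $\rho(g,1,d)+4+h^0(\omega_C-2A)$, so everything reduces to the vanishing $h^0(\omega_C-2A)=0$ for general $(C,A)$; this vanishing (trivial only when $d\geq g$ or $k=2$) is then proved by rewriting it as the two cohomological conditions \eqref{eq:o-2a1}--\eqref{eq:o-2a2} on $T$ and specializing the scheme of nodes of $\overline{C}$, first to the nodes of the reducible curves $D+E_1+E_2$ of Lemma \ref{claimB} (using Lemma \ref{claimA}), and then to a subscheme of the nodes of a general genus-$d$ curve in the same component. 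No substitute for this appears in your proposal; you flag ``bookkeeping of the fiber dimension'' as the obstacle but misidentify it as an automorphism-counting issue rather than the need for this vanishing. A minor further point: your final balancing uses $\dim(\text{fiber})=\rho(g,1,d)+2k$, which yields $2g-3$ only when $k=2$; the correct count is $\rho(g,1,d)+\dim\Aut(T)=\rho(g,1,d)+4$ (as in your earlier sentence), since the cover datum contributes $1$ (translations of $E$), independently of $k$.
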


\begin{proof}
Let $f:C\to T$ define a general point in some component of $\M_g(k,d)^{sm}$ so that the image $\overline{C}:=f(C)$ is a curve with $\delta:=d(k-1)+1-g$ nodes, that we denote by  $N$. As in the proof of Lemma \ref{claimA}, let $\mu_N:\widetilde{T}_N \to T$ denote the blow up at $N$ and by $\e$ its exceptional divisor. Since $A:=f^*(\O_T(E))\in G^1_d(C)$ moves in dimension $\geq \rho(g,1,d)$, the fiber $\psi_{g,k,d}^{-1}([C])$ has dimension $$\geq \rho(g,1,d)+\dim \Aut(T)=\rho(g,1,d)+4= \dim \M_g(k,d)^{sm}-\dim \M_{g,k}(E).$$ We want to show that equality holds. 
  
  Let $\N_{f}$ denote the normal sheaf of $f$, fitting into
  \[
    \xymatrix{
0 \ar[r] & \T_{C} \ar[r] & f^*\T_T \ar[r] & \N_{f} \ar[r] & 0.
    }
    \]
The coboundary map
    \[ H^0(\N_{f}) \longrightarrow H^1(\T_{C})\]
is the differential
\[ d\psi_{g,k,d}: T_{[f]}\M_g(k,d)^{sm} \longrightarrow T_{[C]}\M_g \]
of the forgetful map $\psi_{g,k,d}$ at $[f]$.
Its kernel is isomorphic to
      \[ H^0(f^*\T_T) \cong H^0(f^*(\O_T \+ \O_T(2E))) \cong H^0(\O_{C}) \+ H^0(\O_{C}(2A)),\]
      which has dimension
      \begin{eqnarray*}
        1 +h^0(\O_{C}(2A))  & = & 1+(2d+1-g+h^0(\omega_{C}-2A))= \rho(g,1,d)+4+h^0(\omega_{C}-2A).
          \end{eqnarray*}
      To finish the proof of the proposition, it will therefore be enough to prove that when $\rho(g,1,d) \geq 0$ the vanishing
      \begin{equation}
        \label{eq:o-2a}
        h^0(\omega_{C}-2A)=0
      \end{equation}
      holds for a general $(C,A)$ where $C$ is the domain of a general $[f]$ in some component of $\M_g(k,d)^{sm}$ and $A=f^*(\O_T(E))$. The vanishing holds for degree reasons if $d \geq g$, so we may assume that $4 \leq 2k \leq d < g$. The case $k=2$ thus yields $\delta=d+1-g=0$, $d=g-1$ and $\rho(g,1,d)=g-4=d-3\geq 1$. The vanishing \eqref{eq:o-2a} thus holds as soon as $A$ is not a theta characteristic on $C$ and we may assume this is not the case as $A$ moves in a positive dimensional family. From now on we will therefore assume $6 \leq 2k \leq d < g$.

      From the short exact sequence
      \[
        \xymatrix{
0 \ar[r] & \O_{\widetilde{T}_N}(K_{\widetilde{T}_N}-2\mu_N^*E) \ar[r]^{\hspace{-0.3cm} s_N} & \O_{\widetilde{T}_N}(K_{\widetilde{T}_N}+C-2\mu_N^*E) \ar[r] & \omega_{C}-2A \ar[r] & 0 
        }
        \]
        and the facts that $K_{\widetilde{T}_N}\sim\mu_N^*K_T+\e$ and $C\sim\mu_N^*\overline{C}-2\e$, we see that \eqref{eq:o-2a} is equivalent to the two conditions
        \begin{equation}
          \label{eq:o-2a1}
          h^0(T,\mathcal O_T(\overline{C}-2E+K_T)\otimes\I_N)=0
        \end{equation}
        and
        \begin{equation}
          \label{eq:o-2a2}
   H^1(s_N): H^1(T,\mathcal O_T(-4E)) \longrightarrow H^1(T,\mathcal O_T(\overline{C}-2E+K_T)\otimes\I_N) \;\; \mbox{is injective}
        \end{equation}
   
   Let $V'$ be a component of $\overline{V^{d}(k,d)}$ containing a reducible curve $D+E_1+E_2$ as in Lemma \ref{claimB}, and let $\widehat{V}$ be a component of $\overline{V^g(k,d)}$ containing $V'$. We will prove the vanishings  \eqref{eq:o-2a1} and  \eqref{eq:o-2a2} for the scheme of nodes $N$ of a general curve $[\overline{C}]\in \widehat{V}$.

To prove \eqref{eq:o-2a1},
we may specialize $N$ to a scheme of nodes of a reducible curve $D+E_1+E_2$ (as in Lemma \ref{claimB}) containing all the nodes $N_D$ of $D$ plus $l_1\leq d-2$ nodes of $D \cap E_1$ and $l_2\leq d-2$ nodes of $D \cap E_2$ with $l_1+l_2=2d+1-g$. By Lemma \ref{claimA} applied to the curve $[D]\in V_1(k-2,d)$, we obtain the vanishing
$h^0(\mathcal O_T(\overline{C}-2E+K_T)\otimes \I_{N_D})=h^0(\mathcal O_T(D+K_T)\otimes \I_{N_D})=0$, so that the vanishing \eqref{eq:o-2a1} also holds .

We next specialize $N$ to a length-$\delta$ subscheme $N'$ of the scheme of nodes $N^+$ of a general curve $[\overline{C}^+]\in V'$, whose normalization is denoted by $C^+$. Since the genus of $\overline{C}^+$ is $d$, the vanishing $h^0(\omega_{C^+}-2A)=0$ holds for degree reasons, whence also 
\begin{equation} \label{eq:o-2a1+}
           h^0(T,\mathcal O_T(\overline{C}-2E+K_T)\otimes\I_{N^+})=0
        \end{equation}
        and
      \begin{equation}
          \label{eq:o-2a2+}
   H^1(s_{N^+}):H^1(T,\mathcal O_T(-4E)) \longrightarrow H^1(T,\mathcal O_T(\overline{C}-2E+K_T)\otimes\I_{N^+}) \;\; \mbox{is injective}.
        \end{equation}
        Let $W \subset N^+$ be any subset of $g-d$ points and set $N':=N^+ \setminus W$. From the short exact sequence
        \[
          \xymatrix{
            0 \ar[r] & \I_{N^+} \ar[r] & \I_{N'} \ar[r] & \O_W \ar[r] & 0
           }
          \]
          we obtain the commutative diagram with exact rows:
          \[
          \xymatrix{
            & H^1(\mathcal O_T(-4E)) \ar@{=}[r] \ar[d]^{H^1(s_{N^+})} & H^1(\mathcal O_T(-4E))  \ar[d]^{H^1(s_{N'})} &  \\
            H^0(\O_W) \ar[r]^{\hspace{-1.7cm}\alpha_W} & H^1(\mathcal O_T(\overline{C}-2E+K_T)\otimes\I_{N^+}) \ar[r] & H^1(\mathcal O_T(\overline{C}-2E+K_T)\otimes\I_{N'}) \ar[r] & 0.} 
            \]
            We will show that one may choose $W \subset N^+=\{n_1,\ldots, n_{d(k-2)+1}  \}$
            appropriately so that

            \begin{equation} \label{eq:intcaz}
              \mathrm{Im}\, \alpha_W \cap \mathrm{Im}\, H^1(s_{N^+})=\{0\}.
            \end{equation}
            By \eqref{eq:o-2a2+}, this will imply the injectivity of $H^1(s_{N'})$ and thus condition \eqref{eq:o-2a2} for the nodes $N$ of a general curve $[\overline{C}]\in \widehat{V} \supset V'$. 

            We note that $h^1(\mathcal O_T(-4E))=3$, so that $\mathrm{Im}\, H^1(s_{N^+}) \cong \CC^3$.
By \eqref{eq:o-2a1+} we have the exact sequence
$$
\xymatrix{
  0 \ar[r] & H^0(\mathcal O_T(\overline{C}-2E+K_T)) \ar[r] &  H^0(\O_{N^+}) \ar[r]^{\hspace{-1.6cm} \alpha_{N^+}} &
H^1(\mathcal O_T(\overline{C}-2E+K_T)\otimes\I_{N^+});
}
$$
since $\overline{C}-2E+K_T\equiv (k-2)E+dR+K_T$ has no higher cohomology for $k\geq 3$, the image of $\alpha_{N^+}$ has dimension
\begin{eqnarray*}
  \deg N^+-\chi(\mathcal O_T(\overline{C}-2E+K_T)) & = & d(k-2)+1-d(k-3)= d+1 \\
                           & = & \rho(g,1,d)+g-d+3 \geq g-d+3.
\end{eqnarray*}
Since $H^0(\O_{N^+}) \cong \+_{i=1}^{d(k-2)+1} H^0(\O_{n_i})$, we may choose
a subscheme $W \subset N^+$ of degree $g-d$ so that \eqref{eq:intcaz} is satisfied.
\end{proof}

We can now prove the main theorem and its corollaries.

\begin{proof}[Proof of Theorem \ref{thm:main}]
  Let $[\varphi:C \to E] \in \H^{\tiny \mbox{prim}}_{g,k}(E)$ be general. First of all, we note that $\varphi$ does not factor as the composition $C\stackrel{\alpha}{\to}C'\stackrel{\varphi'}{\to}E$ of a cover $\alpha$ of degree $n\geq 2$ dividing $k$ onto a curve $C'$ of genus $h\geq 2$. This follows because, denoting by $\H_{g,n}(C')$ the Hurwitz scheme of genus $g$ and degree $n$ simply branched covers of a curve $C'$ as above, one computes that $$\dim \H^{\tiny \mbox{prim}}_{h,k/n}(E)+ \dim \H_{g,n}(C')=2h-2+2g-2-n(2h-2)<2g-2=\dim\H^{\tiny \mbox{prim}}_{g,k}(E).$$

 We now assume that $G^1_d(C)^{nc}$ is nonempty. Then we have a morphism $C \to T$,  birational onto its image $[\overline{C}]\in V^g(k,d)$. Let $\widetilde{\G}^1_d$ denote the family of pairs $\{\varphi:C \to E,\mathfrak{g}\}$ (up to isomorphism) such that
$[\varphi]  \in \H^{\tiny \mbox{prim}}_{g,k}(E)$ and $\mathfrak{g}\in G^1_d(C)^{nc}$. Proposition \ref{facile} then yields
\begin{eqnarray*}
  \dim \widetilde{\G}^1_d & \leq &\dim V^g(k,d)-\dim \Aut \PP^1 =  (2d+g-1)-3  \\
   & = & 2d+g-4=\rho(g,1,d)+2g-2. 
\end{eqnarray*}
The forgetful map
$\widetilde{\G}^1_d \to \H^{\tiny \mbox{prim}}_{g,k}(E)$ is dominant by assumption, and thus its fiber $G^1_d(C)^{nc}$ over a general $[\varphi]$ has dimension
 $\leq  \rho(g,1,d)$ and equality follows from Brill-Noether theory.
 
 It remains to show that $G^1_d(C)^{nc}$ is nonempty whenever $\rho(g,1,d) \geq 0$. 
 This is clearly true if $d < 2k$ and follows from Proposition \ref{prop:severi} when $d \geq 2k$.
 \end{proof}

\begin{proof}[Proof of Corollaries \ref{ye} and \ref{ye2}]
Take a general $[C]\in \M_{g,k}(E)$ so that there exists a pri\-mi\-tive degree $k$ cover $\varphi:C\to E$. Let $\mathfrak g$ be a $g^1_h$ on $C$ computing the gonality and assume $h<\frac{g+3}{2} $. By Theorem \ref{thm:main}, $\mathfrak g$ should be composed with $\varphi$ and thus $h=2k$. Let us now fix $d\geq \gon C$ and take a component $G$ of $G^1_d(C)$, which by standard Brill-Noether theory has dimension $\geq \rho(g,1,d)$. If $G$ is contained in the closure of $G^1_d(C)^{nc}$, then Theorem \ref{thm:main} yields $\dim G=\rho(g,1,d)$ and, if $k=2$, then $d\geq g-1$. Again Theorem \ref{thm:main} excludes the possibility that a general $\mathfrak g \in G$ has some base points $p_1,\ldots, p_s$ and $\mathfrak g(-p_1-\cdots-p_s)\in G^1_{d-s}(C)^{nc}$. It remains to treat the case where $G$ coincides with $G^1_{d,n}(C)$. When $k=2$ the dimension of $G^1_{d,n}(C)$ is independent on $n$, cf. \eqref{eq:star}, thus implying \eqref{dim2}. For $k>2$, the maximal dimension is instead obtained for $n=2$, cf. again \eqref{eq:star}, and this yields  \eqref{dim}. 

As concerns points (i),(ii) in Corollary \ref{ye} and (i),(ii),(iii) in Corollary \ref{ye2}, we stress that $G^1_d(C)^{nc}$ is union of irreducible components of $G^1_d(C)$ as soon as it is nonempty. Furthermore, since  $G^1_d(C)^{nc}$ has pure dimension $\rho(g,1,d)$, the locus $G^1_{d,n}(C)$ is not contained in $G^1_d(C)$ as soon as its dimension computed in \eqref{eq:star} is $\geq \rho(g,1,d)$. If $G^1_{d,n}(C)$ were contained in some other $G^1_{d,m}(C)$ with $n\neq m$, by dimensional reasons one should have $n>m$; but then a general linear series in $G^1_{d,m}(C)$ would have more base points than a general element of $G^1_{d,n}(C)$, thus contradicting the containment. As a consequence, the locus $G^1_{d,n}(C)$ is an irreducible component of $G^1_d(C)$ as soon as its dimension is $\geq \rho(g,1,d)$.
\end{proof}


\begin{thebibliography}{12}

\bibitem{Ap}  M.~Aprodu, {\it Remarks on syzygies of $d$-gonal curves},
Math. Res. Lett. {\bf 12} (2005),  387--400.


\bibitem{AF} M.~Aprodu, G.~Farkas, {\it Green's Conjecture for general covers}, Contemp. Math. {\bf 564} (2012) 211--226.

\bibitem{Buj} G.~T.~Bujokas, {\it Covers of an Elliptic Curve E and Curves in $E \x \PP^1$},
ProQuest LLC, Ann Arbor, MI, 2015, 97 pp.

\bibitem{CH} L.~Caporaso, J.~Harris, {\it Counting plane curves of any genus}, Invent. Math. {\bf 131} (1998), 345--392.



\bibitem{CJ}  
K.~Cook-Powell, D.~Jensen, {\it Components of Brill-Noether loci for curves with fixed gonality},
Michigan Math. J. {\bf 71} (2022), 19--45.



\bibitem{DS} T.~Dedieu, E.~Sernesi, {\it Equigeneric and equisingular families of curves on surfaces}, Publ. Mat. {\bf 61} (2017), 175--212.


\bibitem{GK} D.~Gabai, W.~Kazez, {\it The classification of maps of surfaces},
Invent. Math. {\bf 90} (1987), 219--242.



\bibitem {JP} 
D.~Jensen, D.~Ranganathan, {\it Brill-Noether theory for curves of a fixed gonality},
Forum Math. Pi {\bf 9} (2021), Paper No. e1, 33 pp.

\bibitem{Ko1}
 J.~Koll\'ar, {\it Rational Curves on Algebraic Varieties}, Springer-Verlag, Germany, 1996.

  \bibitem{Ke} M.~Kemeny, {\it Projecting syzygies of curves},
 Alg. Geom. {\bf 7} (2020), 561--580.

 

\bibitem{La} H.~K.~Larson, {\it A refined Brill-Noether theory over Hurwitz spaces},
Invent. Math. {\bf 224} (2021), 767--790.

\bibitem{Pa} S.~S.~Park, {\it Esistence of a base-point-free pencils of degree $g-1$ on bielliptic curves}, Osaka J. of Math. {\bf 40} (2003), 279--285.


\bibitem{Pf} N.~Pflueger, {\it Brill-Noether varieties of $k$-gonal curves}, Adv. Math. {\bf 312} (2017), 46--63. 

\bibitem {Va} R.~Vakil, {\it   A Tool for Stable Reduction of Curves on Surfaces}, Advances in Algebraic Geometry Motivated by Physics, 145--154, Amer. Math. Soc., 2001.

 
\end{thebibliography}
\end{document}